\documentclass[12pt,amscd]{amsart}
\usepackage[all]{xy}
\usepackage{graphicx}
\usepackage{amsmath,amsxtra,amssymb,latexsym, amscd,amsthm}

\usepackage{tikz}
\usepackage{ytableau}

 \usepackage{indentfirst}
\usepackage[mathscr]{eucal}
  \usepackage[pagebackref=true]{hyperref}

\newtheorem{thm}{Theorem}[section]

\newtheorem{lem}[thm]{Lemma}
\newtheorem{prop}[thm]{Proposition}
\theoremstyle{definition}
\newtheorem{defn}[thm]{Definition}
\newtheorem{exam}[thm]{Example}

\newtheorem{rem}[thm]{Remark}

\numberwithin{equation}{section}

\DeclareMathOperator{\NN}{\mathbb {N}}

\DeclareMathOperator{\depth}{depth}
\DeclareMathOperator{\pd}{pd}

\DeclareMathOperator{\reg}{reg}

\def\Adm{\operatorname{Adm}}

\def\a {\mathbf a}
\def\b {\mathbf b}

\def\m {\mathfrak m}

\def\k {\mathrm{k}}

\begin{document}

\title{Projective dimension of powers of cover ideal of Ferrers graphs}

\author{Do Trong Hoang}
\address{Faculty of Mathematics and Informatics, Hanoi University of Science and Technology, 1 Dai Co Viet, Bach Mai, Hanoi, Vietnam.}

\email{hoang.dotrong@hust.edu.vn}

\author{Thanh Vu}
\address{Institute of Mathematics, VAST, 18 Hoang Quoc Viet, Hanoi, Vietnam}
\email{vuqthanh@gmail.com}

\subjclass[2020]{13D02, 05E40, 13F55}
\keywords{depth; regularity; Young diagram; cover ideal}

\date{}

\dedicatory{Dedicated to Professor Nguyen Tu Cuong on the occasion of his 75th birthday}
\commby{}

\begin{abstract}
   Let $\lambda = (\lambda_1, \ldots, \lambda_n)$ be a partition with $\lambda_1 = m$. Denote by $J_\lambda$ the cover ideal in the polynomial ring
\(
S = k[x_1, \ldots, x_n, y_1, \ldots, y_m]
\)
associated to the Ferrers graph corresponding to $\lambda$. Let $d(\lambda)$ denote the number of distinct parts of $\lambda$. We prove that
\[
\pd(S/J_\lambda^t)
=
\min\{t,\; d(\lambda)\} + 1
\]
for all $t \ge 1$.
\end{abstract}

\maketitle

\section{Introduction}
\label{sect_intro}
Let $\lambda = (\lambda_1, \ldots, \lambda_n)$ be a partition, namely, a non-increasing sequence of positive integers. Corso and Nagel \cite{CN} studied the Ferrers ideal associated to the partition $\lambda$ and exhibited various connections between the combinatorics of Young diagrams and the algebraic invariants of the monomial ideals and monomial algebras associated to $I_\lambda$. Hoang and Vu \cite{HV1,HV2} further explored the relationships between Young tableaux and skew tableaux and the algebraic properties and algebraic invariants of the tableau and skew tableau ideals, which generalize Ferrers ideals. 

In this work, we establish another connection between the combinatorics of Young diagrams and that of the cover ideals of the Ferrers graphs associated with $\lambda$. We note that the combinatorics of Young diagrams is itself a fascinating subject of study, arising naturally in the representation theory of symmetric groups and general linear groups \cite{Ful}. We now introduce some notation.

Assume that $\lambda_1 = m$. Denote by \(S = \k[x_1,\ldots,x_n,y_1,\ldots,y_m]
\) the standard graded polynomial ring over a field $\k$. The Ferrers graph associated with a partition $\lambda$, denoted by $G_\lambda$, is the bipartite graph with bipartition
\[
V(G_\lambda) = \{x_1,\ldots,x_n\} \cup \{y_1,\ldots,y_m\}
\]
and edge set
\[
E(G_\lambda)=\bigl\{\{x_i,y_j\} \mid 1 \le j \le \lambda_i \bigr\}.
\]
The edge ideal and cover ideal associated with the Ferrers graph $G_\lambda$ are denoted by $I_\lambda$ and $J_\lambda$, respectively. From the work of Fr\"oberg \cite{Fr} and Corso and Nagel \cite{CN}, the projective dimension and regularity of $I_\lambda$ are known. Since $J_\lambda$ is the Alexander dual of $I_\lambda$, Terai's theorem \cite{Te} allows us to determine the regularity and projective dimension of $J_\lambda$ as well.

While the projective dimension and regularity of powers of $I_\lambda$ are well understood \cite{HHZ,HV1}, the corresponding results for powers of $J_\lambda$ have remained unknown. In this work, we determine the projective dimension of all powers of $J_\lambda$. Let $d(\lambda)$ denote the number of distinct parts of $\lambda$. Our main result is the following.

\begin{thm}\label{thm_pd_power_J}
Let $\lambda$ be a partition. Then, for all $t \ge 1$,
\[
\pd (S/J_\lambda^t) = 1 + \min \{t, d(\lambda)\}.
\]
\end{thm}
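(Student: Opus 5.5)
We will prove the two inequalities $\pd(S/J_\lambda^t) \le 1+\min\{t,d(\lambda)\}$ and $\pd(S/J_\lambda^t)\ge 1+\min\{t,d(\lambda)\}$ separately, working with the explicit description of $J_\lambda$. Group the rows of the Young diagram into blocks of equal length. Say the distinct parts are $a_1>a_2>\cdots>a_d$, with $d=d(\lambda)$, and let $X_k$ be the set of variables $x_i$ with $\lambda_i=a_k$. The minimal vertex covers of $G_\lambda$ are then
\[
C_k=\{x_i : \lambda_i> a_{k}\}\cup\{y_j: j\le a_{k}\}\ \text{ for suitable } k,
\]
together with the two trivial covers, all $x$'s and all $y$'s. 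In total there are $d+1$ covers, and they are linearly ordered by a ``staircase'' (each cover uses an initial segment of $y$'s and a final segment of $x$'s complement). Hence $J_\lambda=\bigcap_{k=0}^{d}P_k$, where each $P_k=(x_i:\lambda_i>a_{k+1})+(y_j:j\le a_{k})$ is generated by variables. Since $G_\lambda$ is bipartite, $J_\lambda^t=J_\lambda^{(t)}=\bigcap_k P_k^t$. After identifying the variables in each block, i.e.\ after polarization-type reduction, $S/J_\lambda^t$ behaves like the corresponding quotient for the partition with all parts distinct of the ``collapsed'' shape. I would first prove a lemma that $\pd$ is unchanged when a block $X_k$ or a block of $y$'s of equal behaviour is replaced by a single variable. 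This follows because $J_\lambda^t$ is obtained from the collapsed ideal by substituting a sum of variables, or equivalently by a flat base change and a depolarization argument.

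\textbf{Upper bound.} I plan to induct on $d$ and on $t$ via the standard short exact sequences. Let $y=y_{m}$ be a variable lying only in the top block (in $P_0$ only). Then $J^t:y=\bigcap_{k\ge1}P_k^t\cap P_0^{t-1}$, and $(J^t,y)$ reduces to the ideal for the partition with the last column removed. I will use
\[
0\to S/(J^t:y)(-1)\to S/J^t\to S/(J^t,y)\to 0
\]
and iterate over the powers of $y$. More cleanly, I intend to use the decomposition $J^t=\sum_{s}y^{s}\,(\text{ideal of smaller data})$ and the Betti splitting. The key sub-claim is that mixed intersections $\bigcap_k P_k^{t_k}$, with $t_k$ non-increasing in a suitable sense, have $\pd\le 1+\#\{\text{distinct values among } t_k\text{-jumps}\}$ bounded by $\min(t,d)$. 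The count of nonzero ``jumps'' $t_{k}-t_{k+1}$ can never exceed either $t$ or $d$, and this gives exactly the bound. So I will strengthen the statement to these multi-exponent intersections (these are the ideals that appear under colon operations) and prove by induction that $\pd S/\bigcap P_k^{t_k}\le 1+(\text{number of }k\text{ with } t_k\ne t_{k+1})$.

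\textbf{Lower bound.} Here I will exhibit a nonvanishing local cohomology, or equivalently an associated-prime/depth witness. Set $r=\min(t,d)$. I will find a monomial $f$ and a localization, namely inverting the variables not in a chosen set, such that $(J^t:f)$ becomes, after localization, an ideal whose quotient has projective dimension $r+1$. The candidate is the ideal $(x_1,\ldots)$-type staircase $\bigcap_{k\in K}P_k$ on $r+1$ chosen consecutive covers with $|K|=r+1$. This is the edge-cover-type ideal of a path-like Ferrers shape with distinct parts, whose quotient has $\pd$ equal to the number of covers, by direct Taylor/Hochster computation. Since localization and colon by monomials do not increase $\pd$ (the degree-selection argument of Takayama/Hochster on $\pd$ of monomial quotients), this yields $\pd(S/J^t)\ge r+1$. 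For choosing $f$ I would take a product of $y$'s and $x$'s with exponents $t-1,\dots$ arranged so that exactly one power remains in each of $r+1$ selected primes. The main obstacle is the upper bound induction: getting the exact exponent bookkeeping in the multi-exponent intersections so that the colon ideals stay in the class and the jump count decreases correctly.
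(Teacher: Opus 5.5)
\textbf{There is a genuine gap at the first structural step.} Your count of minimal vertex covers is right: there are $d+1$ of them, namely all the $x$'s and $C_a=\{x_i:\lambda_i>a\}\cup\{y_1,\dots,y_a\}$ for each distinct part $a$, with $a=m$ giving all the $y$'s. But the intersection of the primes generated by the variables of the minimal vertex covers is the primary decomposition of the \emph{edge} ideal $I_\lambda$, not of $J_\lambda$. The cover ideal is \emph{generated} by the $d+1$ monomials $x_C$. Its primary decomposition is $\bigcap_{\{x_i,y_j\}\in E(G_\lambda)}(x_i,y_j)$, with one prime per edge.

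Already for $\lambda=(1)$ your description gives $(x_1)^t\cap(y_1)^t=(x_1^ty_1^t)$, with $\pd=1$. In fact $J_\lambda^t=(x_1,y_1)^t$, with $\pd=2$. So the colon formula $J^t:y=\bigcap_{k\ge1}P_k^t\cap P_0^{t-1}$, your class of multi-exponent intersections, and the lower-bound witness $\bigcap_{k\in K}P_k$ are all statements about $I_\lambda^t$.

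The strengthened inductive claim $\pd S/\bigcap_k P_k^{t_k}\le 1+\#\{k:t_k\neq t_{k+1}\}$ also fails on its own terms. For $J^t$ itself all exponents equal $t$, so the claim bounds $\pd(S/J_\lambda^t)$ by $1$, or by $2$ if you count a boundary jump, for every $t$. That contradicts the formula you are proving, already at $t=1$ or at $t=d=2$. Jump-counting cannot be what produces $\min\{t,d\}$.

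Likewise, the quotient by the edge ideal of a staircase Ferrers graph with $r$ distinct parts has $\pd=r$ by Corso--Nagel, and the quotient by its cover ideal has $\pd=2$. Neither is $r+1$, so the lower-bound witness does not give what you need.

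Your collapsing lemma does survive, provided you substitute \emph{products} of variables, not sums. Equal rows and equal columns always enter covers together, and $z\mapsto x_{i_1}\cdots x_{i_s}$ is a flat (indeed free) extension, so $\pd$ is preserved.

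\textbf{What is missing is a mechanism by which each unit of $t$ contributes exactly one to $\pd$.} In the paper this works as follows. First, $J_\lambda^t=J_\lambda^{(t)}$ by Herzog--Hibi--Trung, and by Hochster's formula in the form of Lemma 2.2, $\pd(S/J_\lambda^{(t)})=\max\{\reg I(H): H\in\Adm_t(G_\lambda)\}$. The colon ideals relevant to your approach are $J^{(t)}:x^{\mathbf a}y^{\mathbf b}=\bigcap_{a_i+b_j<t}(x_i,y_j)^{t-a_i-b_j}$. They are indexed by edges, not covers, and this is exactly where admissible subgraphs come from.

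For the upper bound, group the $y_j$ by the value $b_j=i\in\{0,\dots,t-1\}$. This writes $I(H)=\sum_{i=0}^{t-1}I(H_i)$, where each $H_i$ is an induced subgraph, hence a Ferrers graph with $\reg I(H_i)\le 2$. Kalai--Meshulam then gives $\reg I(H)\le t+1$. The bound $1+d$ for large $t$ comes from the HKTT limit $1+\nu_o(G_\lambda)$, together with $\nu_o(G_\lambda)=d(\lambda)$ and monotonicity.

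For the lower bound, peel off the top block of equal rows at each step. This builds a $t$-admissible subgraph that is a disjoint union of $t$ complete bipartite graphs, and its edge ideal has regularity exactly $t+1$.
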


It is worth noting that explicit formulas for the depth of powers of ideals are relatively rare, even in the case of edge ideals of graphs \cite{BC,MTV,HHV}. For cover ideals of graphs, when the symbolic depth function is not constant, the only recent explicit results are those for paths \cite{DHNT} and cycles \cite{DHV}. Moreover, although the depth of symbolic powers of cover ideals of graphs is known to be weakly decreasing \cite{HKTT}, it may exhibit several plateau regions before stabilizing. The behavior established in this work for Ferrers graphs is particularly simple: the depth decreases by one at each step until stabilization. This phenomenon resembles that of powers of edge ideals of Cohen--Macaulay trees \cite{HHV}.

\begin{exam}
Let $\lambda = (8,8,8,3,2,2)$. Then $d(\lambda)=3$. Hence, for all $t \ge 1$,
\[
\pd(S/J_\lambda^t)=1+\min\{t,3\}.
\]
\end{exam}

Recently, Dung, Hang, and Vu \cite{DHV}, building on Hochster's formula \cite{Hoc}, translated the problem of computing the depth of symbolic powers of cover ideals into the problem of determining the regularity of certain admissible subgraphs. We refer the reader to the next section for further details.

In the next section, we recall the necessary notation and results concerning admissible subgraphs. In Section~\ref{sec_app}, we apply these tools to establish our main theorem.

\section{Depth of symbolic powers of cover ideals via admissible subgraphs}
In this section, we first review the definitions and basic properties of depth for monomial ideals. We then recall the notion of admissible subgraphs and explain how it can be used to compute the depth of symbolic powers of cover ideals of hypergraphs. Finally, we recall several results concerning Ferrers ideals.

Throughout the first two subsections, let
\( S = \k[x_1,\ldots, x_n]
\)
be a standard graded polynomial ring over a field $\k$ with homogeneous maximal ideal
\(
\m = (x_1,\ldots, x_n).
\)
We begin by recalling the notion of depth. For a finitely generated graded $S$-module $L$, the depth of $L$ is defined by
\[
\depth(L) = \min\{i \mid H_{\m}^i(L) \ne 0\},
\]
where $H^{i}_{\m}(L)$ denotes the $i$-th local cohomology module of $L$ with respect to $\m$.

\subsection{Graphs and their edge ideals and cover ideals} We recall some basic notions from graph theory; for further details, see~\cite{Di}. Let $G$ be a simple graph with vertex set $V(G) = \{1, \ldots, n\}$ and edge set $E(G)$. Throughout, we assume that $G$ has no isolated vertices.

A graph $H$ is a \emph{subgraph} of $G$ if $V(H) \subseteq V(G)$ and $E(H) \subseteq E(G)$. It is an \emph{induced subgraph} of $G$ if, for every subset $e \subseteq V(H)$, the set $e$ is an edge of $H$ if and only if it is an edge of $G$.

A subset $W \subseteq V(G)$ is called a \emph{vertex cover} of $G$ if $W \cap e \neq \emptyset$ for every edge $e \in E(G)$. It is called a \emph{minimal vertex cover} if no proper subset of $W$ is a vertex cover of $G$.

\begin{defn}
Let $G$ be a graph with vertex set $V(G) = \{1, \ldots, n\}$ and edge set $E(G)$. The \emph{edge ideal} and \emph{cover ideal} of $G$, denoted by $I(G)$ and $J(G)$, respectively, are defined by
\[
I(G) = (x_ix_j \mid \{i,j\} \in E(G))
\quad \text{and} \quad
J(G) = \bigcap_{\{i,j\} \in E(G)} (x_i,x_j).
\]
\end{defn}

In particular, $J(G)$ is the Alexander dual of $I(G)$. It is well known that
\[
J(G) = (x_W \mid W \text{ is a minimal vertex cover of } G),
\]
where $x_W = \prod_{i \in W} x_i$.

The $t$-th symbolic power of $J(G)$ is given by
\[
J(G)^{(t)}
=
\bigcap_{ \{i,j\} \in E(G)}
(x_i, x_j)^t.
\]

\subsection{Admissible subgraphs}

\begin{defn}
Let $G$ be a graph with vertex set $V(G) = [n]$. A subgraph $H$ of $G$ is called a \emph{$t$-admissible subgraph} of $G$ if there exists an exponent vector $\a = (a_1, \ldots, a_n) \in \mathbb{N}^n$ such that
\[
E(H)
=
\bigl\{
\{i, j\} \in E(G)
\;\big|\;
a_i + a_j < t
\bigr\}.
\]
The set of all $t$-admissible subgraphs of $G$ is denoted by $\Adm_t(G)$.
\end{defn}

The following result is \cite[Lemma 2.11]{DHV}. We state it in terms of projective dimension, since this formulation is more convenient for our application. The depth and projective dimension are related by the Auslander--Buchsbaum formula.

\begin{lem}\label{lem_depth_admissible}
Let $G$ be a simple graph. Then
\[
\pd(S/J(G)^{(t)})
=
\max \{\reg(I(H)) \mid H \in \Adm_t(G)\}.
\]
\end{lem}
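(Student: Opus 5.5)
The plan is to compute $\depth S/J(G)^{(t)}$ through Takayama's formula for the local cohomology of monomial ideals, identify the degree complexes with Alexander duals of independence complexes of $t$-admissible subgraphs, and then translate back using Hochster's formula for $\reg I(H)$. The Auslander--Buchsbaum formula $\pd = n - \depth$ then gives the statement.

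\textbf{Step 1 (degree complexes).} Takayama's formula gives, for $\a \in \mathbb{Z}^n$ with $G_\a = \{i \mid a_i < 0\}$,
\[
H^i_\m(S/J(G)^{(t)})_\a \cong \widetilde{H}_{i-|G_\a|-1}\bigl(\Delta_\a(J(G)^{(t)});\k\bigr),
\]
where $\Delta_\a$ is the degree complex. Since $J(G)^{(t)}=\bigcap_{e\in E(G)} P_e^t$ is a symbolic power of a squarefree ideal, I will use the description of Minh and Trung. Writing $\a^+$ for the nonnegative part of $\a$, the complex $\Delta_{\a^+}$ is generated by those facets $V\setminus\{i,j\}$ of the Stanley--Reisner complex of $J(G)$ with $a_i+a_j<t$. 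For general $\a$, one takes the link of $\Delta_{\a^+}$ at $G_\a$, provided $G_\a$ is a face.

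With $H$ the $t$-admissible subgraph defined by $\a^+$, $\Delta_{\a^+}$ is the complex on $V$ whose facets are the complements of edges of $H$. This is exactly the Alexander dual of the independence complex $\operatorname{Ind}(H)$ taken on the vertex set $V$.

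\textbf{Step 2 (handling $G_\a$ and isolated vertices).} A vertex of $V$ not covered by $E(H)$ is a cone point of $\operatorname{Ind}(H)$ on $V$, so no homology arises from $\a$ with $G_\a=\emptyset$ unless $H$ spans $V$. Allowing negative coordinates fixes this. The link of $\Delta_{\a^+}$ at a face $F=G_\a$ is the complex on $V\setminus F$ whose facets are complements of the edges of $H$ lying outside $F$. This is again the Alexander dual of $\operatorname{Ind}(H')$ for the subgraph $H'$ with $E(H')=E(H)\setminus\{e \mid e\cap F\neq\emptyset\}$.

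Conversely, every induced subgraph $H_W$ of a $t$-admissible $H$ can be realized this way. One way is to raise the coordinates of $\a^+$ outside $W$ to $t$, which keeps the graph admissible, and then put negative entries there. So the relevant data is exactly the pairs ($H\in\Adm_t(G)$, induced subgraph on some $W$), with $|G_\a|=n-|W|$.

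\textbf{Step 3 (duality and Hochster).} Apply Alexander duality on the vertex set $W$:
\[
\widetilde{H}_{k}(\operatorname{Ind}(H_W)^\vee)\cong \widetilde{H}^{|W|-k-3}(\operatorname{Ind}(H_W)).
\]
Substituting $k=i-(n-|W|)-1$ gives that $H^i_\m\neq 0$ in such a degree if and only if $\widetilde{H}_{j}(\operatorname{Ind}(H_W))\neq 0$ with $j=n-i-2$. Hence
\[
\depth S/J(G)^{(t)} = n-\max\{\, j+2 \mid \widetilde H_j(\operatorname{Ind}(H_W))\ne 0,\ H\in\Adm_t(G),\ W\subseteq V\,\}.
\]
By Hochster's formula, $\reg I(H)=\max\{j+2 \mid \widetilde H_j(\operatorname{Ind}(H)_W)\neq 0\}$. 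Taking the maximum over admissible $H$ and applying Auslander--Buchsbaum yields the claimed equality.

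\textbf{Main obstacle.} I expect the hard step to be Step 2, the precise bookkeeping of negative coordinates. One must check that the link construction produces exactly the induced subgraphs of admissible graphs, and that every such induced subgraph arises, so the maximum is neither too small nor too large. I would first verify this on the primary components $P_e^t$ directly, using the Minh--Trung criterion $\sum_{i\notin F}a_i\le t-1$ for facets.
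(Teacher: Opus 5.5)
Your proposal is correct. Note that the paper gives no proof of its own here. It quotes the depth version from \cite[Lemma 2.11]{DHV} and converts it with Auslander--Buchsbaum, so your argument is a self-contained derivation of the cited result rather than a variant of an argument in the text.

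\textbf{The argument checks out.} For $\a\in\mathbb{Z}^n$ with $W=V\setminus G_\a$, the degree complex $\Delta_\a(J(G)^{(t)})$ is the link of $\Delta_{\a^+}$ at $G_\a$. This link is the Alexander dual on $W$ of $\operatorname{Ind}(H_W)$, where $H$ is the admissible graph of $\a^+$, and the index shift $i=n-j-2$ is right.

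\textbf{Two small tidy-ups.}
\begin{enumerate}
\item The ``raise to $t$'' step is unnecessary. As written it is also slightly muddled, since the raised entries are then overwritten by negative ones. The link at $G_\a$ only sees edges of $H$ inside $W$, and these depend only on $a_j$ for $j\in W$. So keeping the certificate of $H$ on $W$ and putting $-1$ off $W$ already realizes $H_W$.
\item You should say explicitly that $W=\emptyset$ is excluded on both sides. In Hochster's formula it is the $\beta_{0,0}(S/I(H))$ term, and on the depth side $G_\a=V$ is never a face of $\Delta_{\a^+}$. Likewise, when $H_W$ has no edges, $\Delta_\a$ is void while $\operatorname{Ind}(H_W)$ is a full simplex on $W\neq\emptyset$, so neither side contributes.
\end{enumerate}

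\textbf{What each route buys.} The citation is short but opaque. Your argument shows exactly where admissibility and induced subgraphs enter.

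The same mechanism can be packaged more compactly. For $\a\in\NN^n$ one has $\Delta_\a(I)=\Delta(\sqrt{I:x^\a})$, and here $\sqrt{J(G)^{(t)}:x^\a}=J(H)$ because $\sqrt{(x_i,x_j)^t:x^\a}$ is $(x_i,x_j)$ or $S$ according as $a_i+a_j<t$ or not. Takayama's formula together with Hochster's formula for the local cohomology of Stanley--Reisner rings then gives $\depth S/J(G)^{(t)}=\min_{H\in\Adm_t(G)}\depth S/J(H)$. Terai's theorem $\pd S/J(H)=\reg I(H)$ finishes the proof. This last step replaces, in one stroke, your explicit links, the Alexander duality on each $W$, and Hochster's formula for $\reg I(H)$.
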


\subsection{Ordered matchings} 
We recall the notion of ordered matching introduced by Constantinescu and Varbaro \cite{CV}, which plays a crucial role in the study of the depth of symbolic powers of edge ideals. Let $G$ be a simple graph. A subset $M \subseteq E(G)$ is called a \emph{matching} of $G$ if no two edges in $M$ share a common vertex. A subset of vertices $U \subseteq V(G)$ is called \emph{independent} if
\( \{u,v\} \notin E(G)
\) for all $u,v \in U$.

\begin{defn}
Let
\[
M = \{ \{ a_1,b_1\}, \ldots, \{a_r,b_r\}\}
\]
be a matching of $G$. The matching $M$ is called an \emph{ordered matching} of $G$ if 
\begin{enumerate}
    \item $\{a_1, \ldots, a_r\}$ is an independent set;
    \item \( \{a_i,b_j\} \in E(G)\) implies that $i \le j$.
\end{enumerate}
The \emph{ordered matching number} of $G$, denoted by $\nu_o(G)$, is the maximum cardinality of an ordered matching of $G$.
\end{defn}

\subsection{Ferrers graphs}\label{subsection_Ferrers} 
Let $\lambda=(\lambda_1,\lambda_2,\ldots,\lambda_n)$ be a partition with
\(m = \lambda_1 \ge \lambda_2 \ge \cdots \ge \lambda_n \ge 1.
\)
By convention, we set $\lambda_0 = m+1$ and $\lambda_{n+1} = 0$. We denote by $\mu$ the conjugate partition of $\lambda$. The Ferrers graph $G_\lambda$ is the bipartite graph on
\[
X = \{x_1, \ldots, x_n\}
\quad \text{and} \quad
Y = \{y_1,\ldots,y_m\}
\]
with
\[
N(x_i) = \{y_1, \ldots, y_{\lambda_i}\}
\quad \text{and} \quad
N(y_j) = \{x_1, \ldots, x_{\mu_j}\}
\]
for all $i = 1, \ldots, n$ and $j = 1, \ldots, m$. The Ferrers ideal associated with the partition $\lambda$ is the edge ideal of the Ferrers graph in the polynomial ring
\(S = \k[x_1,\ldots,x_n,y_1,\ldots,y_m].\) From \cite[Corollary 2.2 and Corollary 2.6]{CN}, we obtain the following result.

\begin{prop}\label{CN_Ilambda} Let $\lambda = (\lambda_1,\ldots,\lambda_n)$ be a partition. Then
\begin{enumerate}
    \item
    \(
    \pd(S/I_{\lambda})
    =
    \max \{\lambda_j + j - 1\mid 1 \le j \le n\},
    \)
    
    \item
    \(
    \reg(S/I_{\lambda}) = 1.
    \)
\end{enumerate}
\end{prop}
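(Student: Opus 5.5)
We prove both parts at once by showing that $I_\lambda$ has \emph{linear quotients} with respect to a suitable ordering of its minimal generators. This gives a linear resolution, and hence $\reg(S/I_\lambda)=1$, and it also yields the projective dimension by counting colon generators. The minimal generators of $I_\lambda$ are the monomials $x_iy_j$ with $1\le i\le n$ and $1\le j\le \lambda_i$. We order them lexicographically: $x_iy_j$ precedes $x_ky_l$ if $i<k$, or if $i=k$ and $j<l$.

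\textbf{Computing the colon ideals.} Fix a generator $x_iy_j$ and let $L_{ij}$ be the ideal generated by its predecessors. We claim
\[
L_{ij}:x_iy_j=(x_1,\ldots,x_{i-1},\,y_1,\ldots,y_{j-1}).
\]
The colon is generated by the monomials $x_ky_l/\gcd(x_ky_l,x_iy_j)$ over predecessors $x_ky_l$. There are two cases.
\begin{enumerate}
\item If $k<i$, the quotient is $x_k$ or $x_ky_l$. Since $j\le\lambda_i\le\lambda_k$, the monomial $x_ky_j$ is itself a generator preceding $x_iy_j$, so $x_k$ lies in the colon. Thus every such quotient is a multiple of some $x_k$ with $k<i$.
\item If $k=i$ and $l<j$, the quotient is $y_l$.
\end{enumerate}
Hence the colon is generated by exactly the $i+j-2$ variables listed above. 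The key point is the monotonicity $\lambda_k\ge\lambda_i$ for $k<i$, which is precisely the Ferrers (partition) condition.

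\textbf{Reading off the invariants.} An equigenerated ideal with linear quotients has a linear resolution, so $\reg(I_\lambda)=2$ and $\reg(S/I_\lambda)=1$. For pd we use the mapping-cone (Herzog--Takayama) description of the resolution of an ideal with linear quotients: $\beta_p(I_\lambda)=\sum_{u}\binom{q(u)}{p}$, where $q(u)$ is the number of generators of the colon ideal attached to the generator $u$. Therefore
\[
\pd(I_\lambda)=\max_u q(u)=\max_{i,\ j\le\lambda_i}(i+j-2)=\max_i(\lambda_i+i-2),
\]
the last step because for fixed $i$ the maximum is attained at $j=\lambda_i$. It follows that $\pd(S/I_\lambda)=\max_{1\le i\le n}(\lambda_i+i-1)$.

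\textbf{Main obstacle.} The only delicate point is verifying the colon computation, specifically that no mixed generator $x_ky_l$ with $k<i$ survives as a minimal generator of the colon. This is handled by the partition inequality as in case (1) above. Everything else is standard machinery for ideals with linear quotients.
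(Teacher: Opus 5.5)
Your proof is correct. The paper gives no argument for this proposition; it simply quotes Corollaries 2.2 and 2.6 of Corso and Nagel. Yours is therefore a genuinely different, self-contained route.

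The colon computation is right. For $k<i$, the inequality $j\le\lambda_i\le\lambda_k$ makes $x_ky_j$ an earlier generator, so each $x_k$ with $k<i$ lies in the colon and absorbs all mixed quotients $x_ky_l$. The $y_l$ with $l<j$ come from row $i$. Since $I_\lambda$ is generated in a single degree, the iterated mapping cone is minimal. You in fact obtain every Betti number:
\[
\beta_p(I_\lambda)=\sum_{i=1}^n\sum_{j=1}^{\lambda_i}\binom{i+j-2}{p}=\sum_{i=1}^n\binom{\lambda_i+i-1}{p+1}-\binom{n}{p+2}.
\]
The second equality is the hockey-stick identity, and the result is the closed Betti number formula for Ferrers ideals due to Corso and Nagel.

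The citation buys the paper brevity and access to finer information: Corso and Nagel also give an explicit cellular description of the minimal resolution. Your argument needs only the standard Herzog--Takayama machinery. It also shows transparently how the nested neighbourhoods $N(x_i)=\{y_1,\dots,y_{\lambda_i}\}$ produce linear quotients.

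In particular, part (2) applied to the induced Ferrers subgraphs $H_i$ is the only input from this proposition used in the proof of the main theorem, together with Kalai--Meshulam. Your argument therefore makes that step independent of the citation.
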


We now compute the ordered matchings of Ferrers graphs.

\begin{lem}\label{lem_ordered_Ferrers}
Let $\lambda$ be a partition. Then
\[
\nu_o(G_\lambda) = d(\lambda).
\]
\end{lem}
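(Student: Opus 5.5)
We prove the two inequalities $\nu_o(G_\lambda)\ge d(\lambda)$ and $\nu_o(G_\lambda)\le d(\lambda)$ separately. Throughout we use that $G_\lambda$ is bipartite with $N(x_1)\supseteq N(x_2)\supseteq\cdots\supseteq N(x_n)$, where $N(x_i)=\{y_1,\dots,y_{\lambda_i}\}$.

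\emph{Lower bound.} Let $d=d(\lambda)$ and choose indices $i_1<i_2<\cdots<i_d$, one for each distinct part, so that
\[
\lambda_{i_1}>\lambda_{i_2}>\cdots>\lambda_{i_d}.
\]
For $k=1,\dots,d$ set
\[
a_k = x_{i_{d+1-k}}, \qquad b_k = y_{\lambda_{i_{d+1-k}}}.
\]
Thus $a_1$ has the smallest part and $a_d$ the largest. We check the three requirements.
\begin{enumerate}
\item The $b_k$ are distinct, so $M=\{\{a_k,b_k\}\}$ is a matching, and each $\{a_k,b_k\}$ is an edge because $b_k\in N(a_k)$.
\item The set $\{a_1,\dots,a_d\}\subseteq X$ is independent, since $X$ is an independent set in the bipartite graph.
\item If $\{a_k,b_l\}\in E(G_\lambda)$, then $\lambda_{i_{d+1-l}}\le \lambda_{i_{d+1-k}}$. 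Since the parts are strictly ordered, this gives $d+1-l\ge d+1-k$, that is, $k\le l$.
\end{enumerate}
Hence $M$ is an ordered matching of size $d$.

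\emph{Upper bound.} Let $M=\{\{a_1,b_1\},\dots,\{a_r,b_r\}\}$ be an ordered matching. The first step is to show that all the $a_k$ lie on the same side of the bipartition. If $a_k=x_i$ and $a_l=y_j$, independence forces $j>\lambda_i$. Consider $b_k=y_{j'}$ with $j'\le\lambda_i$, and $b_l=x_{i'}$ with $j\le\lambda_{i'}$.
\begin{itemize}
\item The edge $\{a_k,b_k\}$ together with $j'\le \lambda_i<j\le\lambda_{i'}$ shows that $\{x_{i'},y_{j'}\}=\{b_l,b_k\}$-type adjacency holds, namely $\{a_k, b_l\}$ or $\{a_l,b_k\}$ must be examined.
\item Since $\lambda_{i'}\ge j>\lambda_i$, we have $i'<i$. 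Then $\lambda_{i'}\ge\lambda_i$, so $y_{j'}$ is adjacent to $x_{i'}$. Also $\mu_{j'}\ge i$, but we still need to compare $a_l=y_j$ with $b_k=y_{j'}$, which is not an edge.
\end{itemize}
The ordering condition is what should produce the contradiction: $\{a_k,b_l\}=\{x_i,x_{i'}\}$ is not an edge, so that route fails, and the contradiction has to come from $\{a_l,b_k\}$ or from a cleaner argument. If this direct approach does not close, we instead reduce by symmetry. Conjugation swaps $X$ and $Y$ and preserves $d$, because $d(\mu)=d(\lambda)$ (both count the corners of the Young diagram). It therefore suffices to bound matchings whose $a$'s all lie in $X$, and to separately handle the mixed case by an exchange argument.

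Now suppose all $a_k=x_{i_k}$ lie in $X$. If two of them have equal parts, $\lambda_{i_k}=\lambda_{i_l}$ with $k<l$, then $N(a_k)=N(a_l)$, so $\{a_l,b_k\}$ is an edge. The ordering condition then gives $l\le k$, a contradiction. Hence the parts $\lambda_{i_k}$ are pairwise distinct, and $r\le d$.

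The main obstacle is the mixed case, where some $a$'s lie in $X$ and some in $Y$. In that case we would take the first index $k$, in the matching order, where the sides differ, and use the nested neighborhoods to find a forbidden edge $\{a_l,b_k\}$ with $l>k$. Alternatively, we would show the mixed case contributes at most as much as the conjugate count by a corner-counting injection: each pair $(a_k,b_k)$ is mapped to a distinct corner of the diagram of $\lambda$, and these corners are in bijection with the distinct parts.
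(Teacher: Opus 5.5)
\textbf{Lower bound.} Your matching is not ordered. In step (3), $d+1-l\ge d+1-k$ gives $l\le k$, not $k\le l$. With your indexing, $\{a_k,b_l\}$ is an edge exactly when $l\le k$, which is the forbidden direction. Already for $\lambda=(2,1)$ you get $a_1=x_2$, $b_1=y_1$, $a_2=x_1$, $b_2=y_2$, and $\{a_2,b_1\}=\{x_1,y_1\}$ is an edge. The fix is to list the parts from largest to smallest, as the paper does: take $a_k=x_{i_k}$ and $b_k=y_{\lambda_{i_k}}$. Then $\{a_k,b_l\}\in E(G_\lambda)$ iff $\lambda_{i_l}\le\lambda_{i_k}$ iff $k\le l$.

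\textbf{Upper bound.} This is the more serious gap. Your all-in-$X$ argument (equal parts force the edge $\{a_l,b_k\}$ with $k<l$) is correct and is essentially the paper's argument. Your first step, however, is false: the $a_k$ need not lie on one side. For $\lambda=(2,1)$, $M=\{\{x_2,y_1\},\{y_2,x_1\}\}$ with $a_1=x_2$, $a_2=y_2$ is an ordered matching. Your plan of taking the first index where the sides differ cannot produce a forbidden edge either. If $a_k\in X$ and $a_l\in Y$, then $a_l,b_k\in Y$ and $a_k,b_l\in X$, so the ordering condition never relates oppositely oriented pairs; their only link is independence. So the mixed case is the real content, and it is left open. (The paper writes the edges of $M$ as $\{x_{a_i},y_{b_i}\}$, so it also only treats the same-side case; you are right that mixed matchings need an argument.)

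\textbf{Closing the mixed case.} Your corner-counting idea works once the disjointness step is supplied. Call a cell $(p,\lambda_p)$ with $\lambda_p>\lambda_{p+1}$ a corner; there are $d(\lambda)$ of them. Let $A=\{a_1,\dots,a_r\}$, $R=\{i: x_i\in A\}$ and $C=\{j: y_j\in A\}$. If one of $R$, $C$ is empty, your argument or its conjugate suffices, so assume both are nonempty. By the observation above, each of the two groups of pairs is itself an ordered matching. Hence by your argument and its conjugate, the rows in $R$ have distinct lengths and the columns in $C$ have distinct lengths. Now define two maps:
\begin{enumerate}
\item send $i\in R$ to the corner $(p,\lambda_i)$, where $p\ge i$ is the last row of length $\lambda_i$;
\item send $j\in C$ to the corner $(\mu_j,q)$, where $q\ge j$ is the last column of length $\mu_j$.
\end{enumerate}
Both maps are injective. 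Put $i_0=\min R$ and $j_0=\min C$. Independence of $x_{i_0}$ and $y_{j_0}$ means $\lambda_{i_0}<j_0$. A corner $(p,q)$ in both images would satisfy $p\ge i_0$ and $q\ge j_0$, so $\lambda_{i_0}\ge\lambda_p=q\ge j_0$, a contradiction. Hence $r=|R|+|C|\le d(\lambda)$.
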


\begin{proof}
Let
\( a_1 < a_2 < \cdots < a_r
\) be the row indices such that
\[
\lambda_{a_1} > \lambda_{a_2} > \cdots > \lambda_{a_r}.
\]
Since they correspond to distinct parts of $\lambda$, we have $r \le d(\lambda)$. For simplicity of notation, set \(b_i = \lambda_{a_i}
\) for $i = 1, \ldots, r$. Then
\[
M = \{\{x_{a_1}, y_{b_1}\}, \ldots, \{x_{a_r}, y_{b_r}\}\}
\]
is an ordered matching of $G_\lambda$. Hence,
\[
\nu_o(G_\lambda) \ge d(\lambda).
\]

Conversely, suppose that
\[
M = \{\{x_{a_1}, y_{b_1}\}, \ldots, \{x_{a_r}, y_{b_r}\}\}
\]
is an ordered matching of $G_\lambda$. By definition, we have
\[
\{x_{a_i}, y_{b_j}\} \notin E(G_\lambda)
\]
whenever $i > j$. It follows that
\[
\lambda_{a_i} < b_j \le \lambda_{a_j}.
\]
Hence,
\[
\lambda_{a_1} > \lambda_{a_2} > \cdots > \lambda_{a_r},
\]
which implies that $r \le d(\lambda)$. The proof is complete.
\end{proof}

\section{Proof of the main result}\label{sec_app}
In this section, we analyze the admissible subgraphs of Ferrers graphs and prove our main results. Since $G_\lambda$ is bipartite, a result of Herzog, Hibi, and Trung \cite{HHT} implies that
\(
J_\lambda^t = J_\lambda^{(t)}
\)
for all $t \ge 1$. Hence, the results of the previous section apply.

Since $G_\lambda$ is bipartite, it is convenient to write a certificate of admissibility in terms of a tuple
\(
(\a,\b) \in \NN^n \times \NN^m.
\) A subgraph $H$ of $G_\lambda$ is $t$-admissible with respect to $(\a,\b)$ if
\[
\{x_i,y_j\} \in E(H)
\quad \Longleftrightarrow \quad
\{x_i,y_j\} \in E(G_\lambda)
\text{ and } a_i + b_j < t.
\]

We first consider the problem of projective dimension and begin by proving that the maximum bound is attained.
\begin{lem}\label{lem_max_bound_pd}
Let $\lambda = (\lambda_1,\ldots,\lambda_n)$ be a partition with $d(\lambda)$ distinct parts. For every $1 \le t \le d(\lambda)$, there exists an admissible subgraph $H$ of $G_\lambda$ such that
\[
\reg (I(H)) = 1 + t.
\]
\end{lem}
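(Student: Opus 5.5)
The plan is to realize the bound with the simplest possible admissible subgraph: a disjoint union of $t$ edges. The edge ideal of $t$ pairwise disjoint edges is a complete intersection of $t$ quadrics, so $\reg(S/I(H)) = t$ and $\reg(I(H)) = t+1$. This matches the normalization of Proposition~\ref{CN_Ilambda}: for $t=1$ we get $\reg(I)=2$. So it suffices to find $(\a,\b)\in\NN^n\times\NN^m$ whose $t$-admissible subgraph $H$ has edge set equal to an induced matching of size $t$. Isolated vertices do not affect $I(H)$.

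The candidate matching is the ordered matching built in the proof of Lemma~\ref{lem_ordered_Ferrers}. Since $t\le d(\lambda)$, choose rows $a_1<\cdots<a_t$ with $\lambda_{a_1}>\cdots>\lambda_{a_t}$, and set $b_k=\lambda_{a_k}$. Then $\{x_{a_k},y_{b_k}\}\in E(G_\lambda)$. Moreover, $\{x_{a_i},y_{b_j}\}\in E(G_\lambda)$ holds exactly when $b_j\le \lambda_{a_i}$, that is, exactly when $i\le j$.

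Next comes the weight assignment. Give $x_{a_k}$ the exponent $t-k$ and $y_{b_k}$ the exponent $k-1$, for $k=1,\dots,t$. Give every other vertex exponent $t$. All these exponents lie in $\NN$.

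Now check which edges of $G_\lambda$ survive the condition that the exponent sum be $<t$.
\begin{itemize}
\item Any edge touching a vertex of exponent $t$ has sum at least $t$, so it is excluded.
\item For an edge $\{x_{a_i},y_{b_j}\}$, we necessarily have $i\le j$. Its sum is $t-i+j-1$, which is $<t$ iff $j\le i$.
\end{itemize}
Hence the surviving edges are exactly the matching edges $\{x_{a_k},y_{b_k}\}$, $k=1,\dots,t$.

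This $H$ is therefore a $t$-admissible subgraph of $G_\lambda$. Its edge ideal $(x_{a_1}y_{b_1},\ldots,x_{a_t}y_{b_t})$ is generated by a regular sequence of quadrics in disjoint variables. The Koszul complex then gives $\reg(I(H))=t+1$.

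The only delicate point is the direction of the weights. The opposite orientation ($x_{a_k}\mapsto k-1$, $y_{b_k}\mapsto t-k$) would keep every edge $\{x_{a_i},y_{b_j}\}$ with $i<j$ as well. The choice above is exactly the one that exploits the ordered-matching condition "$\{x_{a_i},y_{b_j}\}\in E$ implies $i\le j$" to cut the surviving edge set down to the matching.
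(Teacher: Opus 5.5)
Your proof is correct, and it takes a different route from the paper.

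\textbf{The paper's route.} The paper argues by induction on $t$. Let $p$ be the last row of the top block of equal parts and $\lambda'=(\lambda_{p+1},\ldots,\lambda_n)$. The paper takes a $(t-1)$-admissible $H'$ of $G_{\lambda'}$ with $\reg I(H')=t$. It then gives rows $1,\ldots,p$ weight $t-1$, raises the column weights coming from $G_{\lambda'}$ by one, and gives the remaining columns weight $0$. The resulting $t$-admissible $H$ is $H'$ together with a disjoint complete bipartite graph, and $\reg I(H_1\sqcup H_2)=\reg I(H_1)+\reg I(H_2)-1$ finishes the step. Unwound, the paper's witness is a disjoint union of $t-1$ complete bipartite graphs and one Ferrers graph.

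\textbf{Your route.} You write down a single explicit certificate supported on the ordered matching of Lemma~\ref{lem_ordered_Ferrers}. The orientation is chosen so that the condition ``$\{x_{a_i},y_{b_j}\}\in E(G_\lambda)\Rightarrow i\le j$'' kills every non-matching edge. You then read off the regularity from the Koszul complex of a complete intersection.

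\textbf{What each buys.} Your version is shorter and non-inductive. It also sidesteps the index bookkeeping the paper's construction needs: the shifted weights $1+b'_j$ only make sense for the columns $j\le\lambda_{p+1}$ of $G_{\lambda'}$, and the rows of $\lambda'$ must be re-indexed by $p$. It makes the role of ordered matchings in the lower bound transparent. The same certificate works verbatim for any bipartite graph with an ordered matching of size $t$ whose vertices $a_1,\ldots,a_t$ all lie on one side of the bipartition. The paper's argument, in exchange, needs nothing about ordered matchings, only the disjoint-union formula for regularity and $\reg I(G_\lambda)=2$.
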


\begin{proof}
We proceed by induction on $t$. The case $t = 1$ is clear, since $G_\lambda$ is the only admissible subgraph and
\( \reg (I(G_\lambda)) = 2.\)

Now assume that $t \ge 2$. In particular, $d(\lambda) \ge 2$. Let $p$ be the smallest index such that
\( \lambda_p > \lambda_{p+1},
\) and let
\[
\lambda' = (\lambda_{p+1}, \ldots, \lambda_n).
\]
Then
\( d(\lambda') = d(\lambda) - 1.
\) By the induction hypothesis, there exists an admissible subgraph $H'$ of $G_{\lambda'}$ such that
\(
\reg I(H') = t.
\)
Assume that the certificate for $H'$ is given by $\a'$ and $\b'$. Define
\[
a_i = t-1 \quad \text{for all } i = 1, \ldots, p, \text { and }
a_j = a'_j \quad \text{for } j > p,
\]
and 
\[
b_j = 1 + b'_j \quad \text{for } j = 1, \ldots, \lambda_p-1,
\text{ and }
b_j = 0 \quad \text{for } j \ge \lambda_p.
\]

Then the admissible subgraph $H$ corresponding to the certificate $(\a,\b)$ is the disjoint union of $H'$ and a complete bipartite graph. Hence,
\[
\reg(I(H)) = \reg(I(H')) + 1 = t+1.
\]
The proof is complete.
\end{proof}

\begin{proof}[Proof of Theorem \ref{thm_pd_power_J}]
By \cite{HKTT}, we know that $\pd (S/J_\lambda^t)$ is weakly increasing and that
\[
\lim_{t\to \infty} \pd (S/J_\lambda^t)
=
1 + \nu_o(G_\lambda).
\]
By Lemma~\ref{lem_depth_admissible}, Lemma~\ref{lem_max_bound_pd}, and Lemma~\ref{lem_ordered_Ferrers}, it remains to prove that if $H$ is a $t$-admissible subgraph of $G_\lambda$, then
\[
\reg (I(H)) \le t+1.
\]

For each $i = 0, \ldots, t-1$, let
\[
Y_i = \{ j \mid b_j = i\}
\qquad \text{and} \qquad
X_i = \{ \ell \mid a_\ell \le t-1-i\}.
\]
Denote by $H_i$ the induced subgraph of $G_\lambda$ on the vertex set $X_i \cup Y_i$. By definition, $H$ is the union of the graphs $H_0,\ldots,H_{t-1}$. Equivalently,
\[
I(H) = I(H_0) + \cdots + I(H_{t-1}).
\]
Since each $H_i$ is a (possibly empty) Ferrers graph, the conclusion follows from a theorem of Kalai and Meshulam \cite{KM}.
\end{proof}

We conclude with a few remarks.

\begin{rem}
\begin{enumerate}
    \item Dung and Vu showed that one can associate a type sequence to a cochordal graph, generalizing the partition associated with a Ferrers graph. It would be interesting to extend our results to symbolic powers of cover ideals of arbitrary cochordal graphs.

    \item The formula above shows that the depth of powers of cover ideals of $G_\lambda$ is independent of the characteristic of the base field $\k$.

    \item All experimental computations were carried out in Macaulay2 \cite{M2}.
\end{enumerate}
\end{rem}

\end{document}